\definecolor{mygreen}{rgb}{0., 0.5, 0.}
\renewcommand{\d}{\,\mathrm{d}}
\newcommand{\E}[1]{\mathbb{E}\left[#1\right]} 
\newcommand{\no}[1]{\Vert#1\Vert} 
\newcommand{\nos}[1]{\Vert#1\Vert^2} 
\newcommand{\be}[1]{\left\vert#1\right\vert} 
\newcommand{\bes}[1]{\vert#1\vert^2} 
\renewcommand{\d}{\,\mathrm{d}}
\renewcommand{\E}[1]{\mathbb{E}\left[#1\right]} 
\renewcommand{\no}[1]{\Vert#1\Vert} 
\renewcommand{\nos}[1]{\Vert#1\Vert^2} 
\renewcommand{\be}[1]{\left\vert#1\right\vert} 
\renewcommand{\bes}[1]{\vert#1\vert^2} 
\newcommand{\fe}[1]{\frac{\nabla #1}{ \sqrt{ \vert \nabla #1 \vert^2 +\epsilon^2}}} 
\newcommand{\R}{\mathbb{R}}
\newcommand{\into}{\int\limits_{\mathcal{O}}}
\renewcommand{\epsilon}{\varepsilon}
\newcommand{\eps}{\epsilon}
\newcommand{\ska}[1]{\left( #1 \right)} 
\renewcommand{\div}{\mathrm{div}}
\newcommand{\intt}{\int_0^t}
\newcommand{\F}{\mathcal{F}}
\newcommand{\Hz}{\mathbb{H}^1_0}
\renewcommand{\L}{\mathbb{L}^2}
\renewcommand{\phi}{\varphi}
\renewcommand{\F}{\mathcal{F}}
\renewcommand{\P}{\mathbb{P}}
\renewcommand{\O}{\mathcal{O}}
\newcommand{\Xe}{X^{\epsilon}}
\renewcommand{\F}{\mathcal{F}}
\renewcommand{\P}{\mathbb{P}}
\renewcommand{\O}{\mathcal{O}}
\newtheorem{thms}{Theorem}[section]
\newtheorem{defs}{Definition}[section]
\newtheorem{cors}{Corollary}[section]
\newtheorem{bems}{Remark}[section]
\newtheorem{lemma}{Lemma}[section]
\begin{document}
\title[Correction to: Numerical approximation of the stochastic TV flow]{Correction to: Convergent numerical approximation of the stochastic total variation flow}

\author{\v{L}ubom\'{i}r Ba\v{n}as}
\address{Department of Mathematics, Bielefeld University, 33501 Bielefeld, Germany}
\email{banas@math.uni-bielefeld.de}
\author{Michael R\"ockner}
\address{Department of Mathematics, Bielefeld University, 33501 Bielefeld, Germany; Academy of Mathematics and Systems Science, CAS, Beijing }
\email{roeckner@math.uni-bielefeld.de}
\author{Andr\'e Wilke}
\address{Department of Mathematics, Bielefeld University, 33501 Bielefeld, Germany}
 \email{ awilke@math.uni-bielefeld.de}

\thanks{Funded by the Deutsche Forschungsgemeinschaft (DFG, German Research Foundation) - SFB 1283/2 2021 - 317210226.}

\begin{abstract}
We correct two errors in our paper \cite{our_paper}.
First error concerns the definition of the SVI solution,
where a boundary term which arises due to the Dirichlet boundary condition, was not included.
The second error concerns the discrete estimate \cite[Lemma 4.4]{our_paper}, which involves the discrete Laplace operator.
We provide an alternative proof of the estimate in spatial dimension $d=1$ by using a mass lumped version of the {discrete} Laplacian. 
Hence, after a minor modification of the fully discrete numerical scheme the convergence in $d=1$
follows along the lines of the original proof.
The convergence proof of the time semi-discrete scheme, which relies on the continuous counterpart of the estimate \cite[Lemma 4.4]{our_paper}, remains valid in higher spatial dimension. 
{The convergence of the fully discrete finite element scheme from \cite{our_paper} in any spatial dimension is shown in \cite{stvf_new} by using a different approach.}
\end{abstract}
\maketitle

\section{Introduction}
Let $\O \subset \R^d$ be an open convex domain with piecewise smooth boundary. We consider numerical approximation of the stochastic total variation flow
\begin{align}\label{TVF}
\d X&= \div\left(\frac{\nabla X}{\be{\nabla X}}\right) \d t -\lambda (X - g) \d t +X\d W, &&\text{in } (0,T)\times \O, \nonumber\\
X & = 0 && \text{on } (0,T)\times \partial \O, \\
X(0)&=x_0 &&\text{in } \O, \nonumber
\end{align}
which is constructed via the discretization of the regularized problem
 \begin{align}\label{reg.TVF}
 \d \Xe&=  \div\left(\frac{\nabla \Xe}{\sqrt{|\nabla \Xe |^2+\epsilon^2}}\right)\d t-\lambda(\Xe-g)\d t+\Xe\d W &&\text{in } (0,T)\times \O, \nonumber\\
 \Xe & = 0 && \text{on } (0,T)\times \partial \O, \\
 \Xe(0)&=x_0 &&\text{in } \O\,.\nonumber
\end{align}
Throughout the paper we employ the notation from \cite{our_paper}.
The first error is corrected in Section~\ref{sec_svi}
and the correction of the second error is provided in Section~\ref{sec_lap}.


\section{Definition of the SVI solution and the uniqueness proof}\label{sec_svi}

In the proof of \cite[Theorem 3.1]{our_paper} the term $IV$ in (29) is wrongly {rewritten } as
\begin{align*}
IV = -\ska{X_1^{\epsilon}-X_{2,n}^{\epsilon,\delta},\div \fe{X_{2,n}^{\epsilon,\delta}}}=\ska{\nabla X_1^{\epsilon}-\nabla X_{2,n}^{\epsilon,\delta}, \fe{X_{2,n}^{\epsilon,\delta}}}\,
\end{align*}
{since $X_1^{\epsilon}$ is only in $BV(\O)$  and  possibly non-zero at the boundary.}
Hence, to show the {uniqueness of the SVI solutions for $\eps>0$} requires a modification of the definition which takes into account the value at the solution at the boundary.
This definition  is consistent with the one from \cite{Roeckner_TVF_paper}, {which also shows uniqueness in case $\eps=0$}.

We define the following functionals which include the corresponding boundary terms
\begin{align*}
\bar{ \mathcal{J}}_{\eps,\lambda}(u)=
\begin{cases}
\mathcal{J}_{\eps,\lambda}(u) + \int_{\partial \O} \be{\gamma_0(u)} \d \mathcal{H}^{n-1} ~~ & \text{for}~ u \in BV(\O)\cap L^2(\O),\\
+\infty  ~~ &\text{for}~ u \in BV(\O)\setminus L^2(\O),
\end{cases}
\end{align*}
and 
\begin{align*}
\bar{ \mathcal{J}}_\lambda (u)=
\begin{cases}
\mathcal{J}_\lambda (u) + \int_{\partial \O} \be{\gamma_0(u)} \d \mathcal{H}^{n-1}  ~~ & \text{for}~ u \in BV(\O)\cap L^2(\O),\\
+\infty   ~~ & \text{for}~ u \in BV(\O)\setminus L^2(\O),
\end{cases}
\end{align*}
where $\gamma_0(u) $ is the trace of $u\in BV(\O)$ on the boundary and $\d \mathcal{H}^{n-1}$
is the Hausdorff measure on $\partial\O$.
$\bar{\mathcal{J}}_{\eps,\lambda}$ and $\bar{\mathcal{J}}_{\lambda}$ are  both convex and lower semicontinuous on $L^2(\O)$ and  the lower semicontinuous hulls of $\bar{\mathcal{J}}_{\eps,\lambda}\vert_{\Hz}$ or $\bar{\mathcal{J}}_{\lambda}\vert_{\Hz}$ respectively, cf. \cite[Proposition 11.3.2]{book_attouch}.
We define the SVI solution as follows.
\begin{defs}\label{def_varsoleps}
Let $0  < T < \infty$, {$\varepsilon \in [0,1]$} and {$x_0 \in L^2(\Omega,\F_0;\L)$ and $g \in \L$}.
Then a $(\F_t)$-{adapted} map {$\Xe \in L^2(\Omega; C([0,T];\L))\cap  L^1(\Omega; L^1((0,T);BV(\O)))$ 
(denoted by $X \in L^2(\Omega; C([0,T];\L))\cap  L^1(\Omega; L^1((0,T);BV(\O)))$ for $\eps=0$)}
is called an {SVI  solution} of (\ref{reg.TVF}) (or (\ref{TVF}) if $\varepsilon=0$) if $\Xe(0)=x_0$ ($X(0)=x_0$), and
for each $(\F_t)$-progressively measurable process $G\in L^2(\Omega \times (0,T),\L)  $ and for each $(\F_t)$-adapted $\L$-valued process 
{$Z$} with $\P$-a.s. continuous sample paths, {s.t. $Z \in L^2(\Omega \times (0,T);\Hz)$}, which satisfy the equation 
\begin{align*}
\d Z(t)= -G(t) \d t +Z(t)\d W(t), ~ t\in[0,T],
\end{align*}
it holds for {$\eps \in (0,1]$} that
\begin{align}\label{reg.SVI}
\frac{1}{2}& \E{\nos{\Xe(t)-Z(t)}}+\E{\intt {\bar{\mathcal{J}}_{\eps,\lambda}}(\Xe(s)) \d s} \nonumber\\
&\leq  \frac{1}{2} \E{\nos{x_0-Z(0)}}+\E{\intt { \bar{\mathcal{J}}_{\eps,\lambda}}(Z(s)) \d s}  \\
&+ \frac{1}{2}\E{\intt \nos{\Xe(s)-Z(s)} \d s}
+\E{\intt \ska{\Xe(s)-Z(s),G} \d s}\,,\nonumber
\end{align}
and analogically for $\eps=0$ it holds that
\begin{align}\label{SVIeps0}
\frac{1}{2}& \E{\nos{X(t)-Z(t)}}+\E{\intt { \bar{\mathcal{J}}_{\lambda}}(X(s)) \d s} \nonumber\\
&\leq  \frac{1}{2} \E{\nos{x_0-Z(0)}}+\E{\intt { \bar{\mathcal{J}}_{\lambda}}(Z(s)) \d s}  \\
&+\frac{1}{2} \E{\intt \nos{X(s)-Z(s)} \d s}
+\E{\intt \ska{X(s)-Z(s),G} \d s}\nonumber.
\end{align}
\end{defs}

The existence of SVI solutions (\ref{reg.SVI}), (\ref{SVIeps0}) follows as in \cite[Theorem 3.1]{our_paper} by the lower semicontinuity of $\bar{ \mathcal{J}}_{\eps, \lambda}$, $\bar{ \mathcal{J}}_\lambda$, respectively.
{The uniqueness of SVI solution (\ref{SVIeps0}) follows from \cite[Theorem 3.2]{Roeckner_TVF_paper}.}

To show {uniqueness of the SVI solution (\ref{reg.SVI})} we proceed as in \cite[Theorem 3.1]{our_paper}
with exception that the term $IV$ in (29) takes a different form. In particular, to obtain uniqueness we have to show the following estimate: 
\begin{align}\label{iv}
IV:=\ska{X_1^{\eps}-X^{\eps,\delta}_{2,n},-\div \fe{X^{\eps,\delta}_{2,n}}} \leq \bar{\mathcal{J}}_{\eps,0}(\Xe_1)-\bar{\mathcal{J}}_{\eps,0}(X^{\eps,\delta}_{2,n})\, .
\end{align}
{We note that term $IV$ is well defined since $\div \fe{X^{\eps,\delta}_{2,n}} \in \L$ for a.a. $(\omega, t) \in \Omega\times(0,T)$.
Indeed, from \cite[Lemma 3.2]{our_paper} for $\delta > 0$, $n< \infty$
we deduce by parabolic regularity theory that $X^{\eps,\delta}_{2,n}(\omega, t) \in \mathbb{H}^2$ for a.a. $(\omega, t) \in \Omega\times(0,T)$ and a direct calculation yields that
$$
\left|\div \fe{X^{\eps,\delta}_{2,n}}\right| \leq 2\frac{|\nabla X^{\eps,\delta}_{2,n}||\nabla^2 X^{\eps,\delta}_{2,n}|}{\big(|\nabla X^{\eps,\delta}_{2,n}|^2 + \eps^2\big)^{\frac{3}{2}}}
 + \frac{|\Delta X^{\eps,\delta}_{2,n}|}{\sqrt{|\nabla X^{\eps,\delta}_{2,n}|^2 + \eps^2}}\,.
$$
}

We show the inequality in (\ref{iv}) by the integration by parts formula using a density argument.
{We fix $(\omega, t)\in \Omega\times(0,T)$ and proceed below with 
$X_1^{\eps} \equiv X_1^{\eps}(\omega,t)$, $X^{\eps,\delta}_{2,n}\equiv X^{\eps,\delta}_{2,n}(\omega,t)$}.
We consider an approximating sequence $x_k \in C^{\infty}(\O)\cap BV(\O)$, s.t. $x_k \rightarrow X_1^{\eps}$ strongly in $\mathbb{L}^1$ and
\begin{equation}\label{xk_j}
\mathcal{J}_{\eps,{0}}(x_k) \rightarrow \mathcal{J}_{\eps,{0}}(X_1^{\eps}) \,\,\text{ for } k \rightarrow \infty\,,
\end{equation}
cf., {\cite[Theorems 10.1.2, 13.4.1 and {Remark 10.2.1}]{book_attouch}} or \cite[Theorem 5.2]{book_temam_plast}.

{Note that, since $X_1^{\eps}(\omega,t)\in \L$ it is straightforward to modify the proof of \cite[Theorems 10.1.2]{book_attouch} 
(see for instance \cite[Proposition 2.2.4]{book_attouch} for the $\mathbb{L}^p$ properties of the mollifiers)
such that the sequence $x_k$ converges strongly in $\L$:
\begin{equation}\label{l2conv}
\|x_k - X_1^{\eps}\|_{\L}\rightarrow 0 \,\,\text{ for } k \rightarrow \infty\,.
\end{equation}
}

Using the integration by parts formula \cite[Theorem 10.2.1]{book_attouch} we obtain that
\begin{align}\label{ibp}
&\ska{x_k-X^{\eps,\delta}_{2,n},-\div \fe{X^{\eps,\delta}_{2,n}}}
=\ska{\nabla(x_k-X^{\eps,\delta}_{2,n}), \fe{X^{\eps,\delta}_{2,n}}}
\nonumber
\\
 &+\int_{\partial \O} \gamma_0(x_k) \fe{X^{\eps,\delta}_{2,n}}\cdot \nu \d \mathcal{H}^{n-1}-\int_{\partial \O} \gamma_0(X^{\eps,\delta}_{2,n}) \fe{X^{\eps,\delta}_{2,n}}\cdot \nu \d \mathcal{H}^{n-1}\,,
\end{align}
where $\nu$ is the outer unit normal vector to $\partial\O$ and $\mathcal{H}^{n-1}$  is the Hausdorff measure on $\partial \O$.

Since  $X^{\eps,\delta}_{2,n}\in\Hz$ it holds that $\gamma_0(X^{\eps,\delta}_{2,n})=0$  and the second boundary integral vanishes.
The first boundary integral can be estimated as
\begin{align}\label{gamma0}
\nonumber
\int_{\partial \O} & \gamma_0(x_k) \fe{X^{\eps,\delta}_{2,n}}\cdot\nu \d \mathcal{H}^{n-1}
\leq \int_{\partial \O} \be{\gamma_0(x_k)} \be{\fe{X^{\eps,\delta}_{2,n}}\cdot\nu} \ d\mathcal{H}^{n-1}
\\
& \leq \int_{\partial \O} \be{\gamma_0(x_k)}\d \mathcal{H}^{n-1} = {\int_{\partial \O} \be{\gamma_0(X_1^{\eps})}\d \mathcal{H}^{n-1}}\,,
\end{align}
where the last equality follows from the fact that the trace of $x_k \in C^{\infty}(\O)\cap BV(\O)$ coincides with the trace of $X_1^{\eps}$, cf. \cite[Remark 10.2.1]{book_attouch}.

By the convexity of $\mathcal{J}_{\eps,0}$ we deduce that
\begin{equation}\label{convj}
\ska{\nabla(x_k-X^{\eps,\delta}_{2,n}), \fe{X^{\eps,\delta}_{2,n}}}  \leq \mathcal{J}_{\eps,0}(x_k)-\mathcal{J}_{\eps,0}(X^{\eps,\delta}_{2,n})\,.
\end{equation}


{Hence, (\ref{iv}) follows after substituting (\ref{convj}), (\ref{gamma0}) into (\ref{ibp}) and taking the limit for $k\rightarrow \infty$ and noting (\ref{xk_j})}, (\ref{l2conv}).

The rest of the proof follows analogously to the original proof of \cite[Theorem 3.1]{our_paper}.

\section{Convergence of the full discretization}\label{sec_lap}
In the proof of \cite[Lemma 4.4]{our_paper} it is concluded that 
\begin{align*}
&\frac{1}{2}\sum_{K,K'\in \mathcal{T}_h}
\bar{v}_h^TA_K^TM^{-1}A_{K'}\bar{v}_h
\left( (\bes{\nabla v_h}+\epsilon^2)_{K}^{-\frac{1}{2}}+(\bes{\nabla v_h}+\epsilon^2)_{K'}^{-\frac{1}{2}} \right)
\\
&\geq \frac{1}{2}\sum_{K,K' \in \mathcal{T}_h}\sqrt{(\bes{\nabla v_h}+\epsilon^2)_{K'}^{-\frac{1}{2}}}\bar{v}_h^TA_K^TM^{-1}A_{K'}\bar{v}_h \sqrt{(\bes{\nabla v_h}+\epsilon^2)_K^{-\frac{1}{2}}} \geq 0\,,
\end{align*}
which is not justified.
Lemma 4.4  is required to obtain the estimate (48) in \cite[Lemma 4.5]{our_paper} (note that the continuos counterpart of the estimate in Lemma 3.2 is obtained using Proposition 2.1),
which is in turn required to show \cite[Theorem 4.1]{our_paper}. 

In this section we show an analogue of the estimate in \cite[Lemma 4.4]{our_paper} for a slightly modified numerical scheme in dimension $d=1$.
Given $J \in \mathbb{N}$ and a mesh size $h=1/J$ we consider a uniform partiton $\mathcal{T}_h=\cup_{j=1}^J T_j$ of the spatial domain  $\O=(0,1)$ 
into subintervals $T_j=(x_{i-1},x_i)$ with nodes $x_j=jh$, $j=0,\dots, J$. As in \cite{our_paper} we consider a finite element space $\mathbb{V}_h\subset\mathbb{H}^1_0$
of piecewise linear globally continuous functions on subordinated to $\mathcal{T}_h$.
The standard nodal interpolation operator
$\mathcal{I}_h:C(\bar{\O})\rightarrow \mathbb{V}_h$
is defined as
\begin{align*}
\mathcal{I}_h \Phi(x_j)=\Phi(x_j) ~~ \forall j=0,\ldots, L.
\end{align*}
We define the discrete (mass-lumped) $\L$-inner product $\ska{\cdot,\cdot}_h$ on $\mathbb{V}_h$ as
\begin{align}
\ska{\phi,\psi}_h&=\into \mathcal{I}_h(\langle
\phi, \psi\rangle)(x)\d x=h \sum_{j=1}^{J-1}  \phi(x_j),\psi(x_j) ~~ \text{for}\,\, \phi,\psi \in \mathbb{V}_h, 
\label{disc.Scalar_prod}
\end{align}
with the corresponding discrete norm $\nos{\psi}_h=\ska{\psi,\psi}_h$.

It is well known that the above discrete inner product and the norm satisfy (cf. \cite{BrennerS02}): 
\begin{align}
\no{v_h}_{\mathbb{L}^2} \leq \no{v_h}_h&\leq C\no{v_h}_{\mathbb{L}^2}~~&&\forall v_h \in \mathbb{V}_h, \label{discNorm_esti}
\\
 \be{ \ska{v_h,w_h}_h-\ska{v_h,w_h}} &\leq Ch \no{v_h}_{\mathbb{L}^2}\no{w_h}_{\mathbb{H}^1}~~&&\forall v_h,w_h \in \mathbb{V}_h. \label{discScal_esti}
\end{align}
We define the mass-lumped Discrete Laplace operator $\Delta_h: \mathbb{V}_h \rightarrow \mathbb{V}_h$ through the identity
 \begin{align}\label{dis.Laplace}
 \ska{\Delta_h v_h, w_h}_h&=- \ska{\nabla v_h,\nabla w_h}.
 \end{align}

The next lemma is the counterpart of \cite[Lemma 4.4]{our_paper} for the $1d$ discrete Laplace operator (\ref{dis.Laplace}).
Numerical experiments (not stated in this paper) indicate that the result also holds for $d>1$
(possibly under some additional assumptions on the shape of the mesh). Nevertheless, the proof of the result for $d>1$ remains open, so far.
\begin{lemma}\label{lem_laplace}
Let $\Delta_h$ be the discrete Laplacian defined by \eqref{dis.Laplace}. Then for any $v_h \in \mathbb{V}_h$,
$\epsilon, h > 0$ the following inequality holds:
\begin{align*}
\ska{\fe{ v_h},\nabla(-\Delta_h v_h)}\geq 0.
\end{align*}
\end{lemma}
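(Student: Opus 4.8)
The plan is to exploit the one‑dimensional structure, which makes both $\nabla v_h$ and the nodal values of $\Delta_h v_h$ completely explicit, so that the claimed inequality reduces to the monotonicity of the scalar function $s\mapsto s/\sqrt{s^2+\epsilon^2}$.

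\emph{Step 1: explicit form of $\Delta_h v_h$.} Write $v_j=v_h(x_j)$ and let $d_j=(v_j-v_{j-1})/h$ denote the (constant) value of $\nabla v_h$ on $T_j=(x_{j-1},x_j)$; since $v_h\in\Hz$ we have $v_0=v_J=0$. Testing the defining identity \eqref{dis.Laplace} with an arbitrary $w_h\in\mathbb{V}_h$, the left‑hand side equals $h\sum_{j=1}^{J-1}(\Delta_h v_h)(x_j)\,w_h(x_j)$ by \eqref{disc.Scalar_prod}, while the right‑hand side equals $-\sum_{j=1}^{J}d_j\big(w_h(x_j)-w_h(x_{j-1})\big)$ because $\nabla v_h$ and $\nabla w_h$ are piecewise constant on a uniform mesh. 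A discrete summation by parts, using $w_h(x_0)=w_h(x_J)=0$, rewrites the latter as $\sum_{j=1}^{J-1}\big(d_{j+1}-d_j\big)w_h(x_j)$. Comparing coefficients yields $(\Delta_h v_h)(x_j)=(d_{j+1}-d_j)/h$ for $j=1,\dots,J-1$, and $(\Delta_h v_h)(x_0)=(\Delta_h v_h)(x_J)=0$ since $\Delta_h v_h\in\mathbb{V}_h\subset\Hz$.

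\emph{Step 2: reduce the bilinear form to a single sum.} Put $\psi_h:=-\Delta_h v_h\in\mathbb{V}_h$, so $\psi_h(x_j)=(d_j-d_{j+1})/h$ for interior $j$ and $\psi_h(x_0)=\psi_h(x_J)=0$. On each $T_j$ the integrand $\fe{v_h}$ is the constant $f_\epsilon(d_j)$ with $f_\epsilon(s):=s/\sqrt{s^2+\epsilon^2}$, and $\nabla\psi_h$ is the constant $(\psi_h(x_j)-\psi_h(x_{j-1}))/h$, hence
\begin{equation*}
\ska{\fe{v_h},\nabla(-\Delta_h v_h)}=\sum_{j=1}^{J}f_\epsilon(d_j)\big(\psi_h(x_j)-\psi_h(x_{j-1})\big).
\end{equation*}
Applying the same summation by parts as in Step 1 (again the boundary terms drop since $\psi_h(x_0)=\psi_h(x_J)=0$) and inserting the formula for $\psi_h(x_j)$ gives
\begin{equation*}
\ska{\fe{v_h},\nabla(-\Delta_h v_h)}=\frac{1}{h}\sum_{j=1}^{J-1}\big(f_\epsilon(d_j)-f_\epsilon(d_{j+1})\big)\big(d_j-d_{j+1}\big).
\end{equation*}

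\emph{Step 3: conclude by monotonicity.} Since $f_\epsilon'(s)=\epsilon^2(s^2+\epsilon^2)^{-3/2}>0$, the map $f_\epsilon$ is strictly increasing on $\R$, so $(f_\epsilon(a)-f_\epsilon(b))(a-b)\geq 0$ for all $a,b\in\R$. Therefore every summand above is nonnegative and the claim follows.

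I expect the only delicate points to be bookkeeping ones: correctly tracking that the boundary contributions in the two summations by parts vanish — this is exactly where $\mathbb{V}_h\subset\Hz$ and the homogeneous Dirichlet condition enter, both for $v_h$ and, crucially, for $\Delta_h v_h$ — and using the mass‑lumped inner product \eqref{disc.Scalar_prod} rather than the exact $\L$‑product; the latter would couple all interior nodes through the tridiagonal mass matrix and destroy the local formula $(\Delta_h v_h)(x_j)=(d_{j+1}-d_j)/h$ on which Step 1 rests. In dimension $d>1$ this last mechanism is precisely what fails: $\nabla v_h$ is piecewise constant but vector‑valued, and even with mass lumping the second difference at an interior node mixes contributions from several neighbouring simplices whose gradients are not aligned along a single line, so the telescoped sum can no longer be split into manifestly nonnegative scalar increments of the form $(f_\epsilon(d_j)-f_\epsilon(d_{j+1}))(d_j-d_{j+1})$.
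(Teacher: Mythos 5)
Your proof is correct and follows essentially the same route as the paper's: identify the mass-lumped discrete Laplacian with the standard second-difference stencil $(d_{j+1}-d_j)/h$, sum by parts using the vanishing of $\Delta_h v_h$ at the boundary nodes, and conclude from the monotonicity of $s\mapsto s/\sqrt{s^2+\epsilon^2}$ (the paper phrases this as convexity of $\sqrt{|\cdot|^2+\epsilon^2}$). Your closing remarks on why mass lumping is essential and why the argument breaks for $d>1$ accurately reflect the discussion surrounding the lemma.
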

\begin{proof}
Since $\mathbb{V}_h$ is the space of piecewise linear functions over $\mathcal{T}_h$
,it holds for 
$v_h \in \mathbb{V}_h$ 
that
\begin{align*}
\delta_x v^j_h := \partial_x v_h(x) \big|_{T_j}=\frac{v_h(x_j)-v_h(x_j)}{h}\,.
\end{align*}
By definition (\ref{disc.Scalar_prod}) 
and (\ref{dis.Laplace}) we deduce that
\begin{align*}
\Delta_h v_h^j := \Delta_h v_h(x_j)= \frac{v_h(x_{j+1})-2v_h(x_{j})+v_h(x_{j-1})}{h^2}=\frac{\delta_x v^{j+1}_h-\delta_x v^{j}_h}{h}\,,
\end{align*}
and $\Delta_h v_h^0 = \Delta_h v_h^J = 0$.

By the above properties we deduce that
\begin{align*}
&\ska{\fe{ v_h},\nabla(-\Delta_h v_h)}=-\ska{\frac{\partial_x v_h}{\sqrt{\bes{\partial_x v_h}+\epsilon^2}},\partial_x \Delta_h v_h}
\\
 &=-\sum_{j=1}^{J}\int_{T_j}\frac{\partial_x v_h}{\sqrt{\bes{\partial_x v_h}+\epsilon^2}}\partial_x \Delta_h v_h\,\mathrm{d}x
\\
 &=-h\sum_{j=1}^{J}\frac{\delta_x v^j_h}{\sqrt{\bes{\delta_x v^j_h}+\epsilon^2}}\delta_x \Delta_h v_h^j
=-\sum_{j=1}^{J}\frac{\delta_x v^j_h}{\sqrt{\bes{\delta_x v^j_h}+\epsilon^2}} \left( \Delta_h v_h^j-\Delta_h v_h^{j-1}\right)
\\
 &=-\sum_{j=1}^{J}\frac{\delta_x v^j_h}{\sqrt{\bes{\delta_x v^j_h}+\epsilon^2}}  \Delta_h v_h^j
+\sum_{j=1}^{J}\frac{\delta_x v^j_h}{\sqrt{\bes{\delta_x v^j_h}+\epsilon^2}} \Delta_h v_h^{j-1}
\\
& =-\sum_{j=1}^{J-1}\frac{\delta_x v^j_h}{\sqrt{\bes{\delta_x v^j_h}+\epsilon^2}}  \Delta_h v_h^j+\sum_{j=1}^{J-1}\frac{\delta_x v^{j+1}_h}{\sqrt{\bes{\delta_x v^{j+1}_h}+\epsilon^2}} \Delta_h v_h^j
\\
& =\sum_{j=1}^{J-1}\left(\frac{\delta_x v^{j+1}_h}{\sqrt{\bes{\delta_x v^{j+1}_h}+\epsilon^2}}-\frac{\delta_x v^j_h}{\sqrt{\bes{\delta_x v^j_h}+\epsilon^2}}  \right)\Delta_h v_h^j
\\
& =\frac{1}{h}\sum_{j=1}^{J-1}\left(\frac{\delta_x v^{j+1}_h}{\sqrt{\bes{\delta_x v^{j+1}_h}+\epsilon^2}}-\frac{\delta_x v^j_h}{\sqrt{\bes{\delta_x v^j_h}+\epsilon^2}}  \right)(\delta_x v_h^{j+1}-{\delta_x v_h^{j}})
\\
& \geq 0
\end{align*} 
where we used the convexity of  $\sqrt{|\cdot|^2+\epsilon^2}$ to deduce the last inequality.
\end{proof}

Using the above lemma one can show the convergence for a slight modification of the fully discrete numerical scheme of \cite{our_paper} where the standard $\L$-inner product is replaced by the discrete inner product (\ref{disc.Scalar_prod}) as follows:
given $x_0,\, g \in \L$ we set $X^{0}_{\eps,h}=\mathcal{P}_h x_0$, $g^h:=\mathcal{P}_h g$ and obtain $X^{i}_{\epsilon,h}$ for $i=1,\dots, N$ as the solution of the following system:
\begin{align}\label{full_eps_TVF_L_2_data}
\ska{X^{i}_{\epsilon,h},v_h}_h = & \ska{X^{i-1}_{\epsilon,h},v}_h-\tau \ska{\fe{X^{i}_{\epsilon,h}},\nabla v_h } \\
&-\tau\lambda\ska{X^{i}_{\epsilon,h} -g^h,v_h}_h+\ska{X^{i-1}_{\epsilon,h},v_h}_h\Delta_i W &&\forall v_h \in \mathbb{V}_h \ \nonumber.
\end{align}
By the equivalence of the norms $\no{\cdot}$ and $\no{\cdot}_h$ (cf. \eqref{discNorm_esti}) the convergence of the above numerical
approximation for $d=1$ follows as in \cite{our_paper} with \cite[Lemma 4.4]{our_paper} replaced by Lemma~\ref{lem_laplace}.

We note that the convergence proof remains valid for $d\geq 1$ in the case of the time-semi discrete variant of the original numerical scheme from \cite{our_paper}:
\begin{align*}
\ska{X^{i}_{\epsilon},\varphi} = & \ska{X^{i-1}_{\epsilon},\varphi}-\tau \ska{\fe{X^{i}_{\epsilon}},\nabla \varphi } \\
&-\tau\lambda\ska{X^{i}_{\epsilon} -g,\varphi}+\ska{X^{i-1}_{\epsilon},\varphi}\Delta_i W &&\forall \varphi \in \mathbb{H}^1_0 \ \nonumber.
\end{align*}
In the semi-discrete setting one employs the continuous counterpart of Lemma~\ref{lem_laplace} and proceeds as in the proof of \cite[Lemma 3.2]{our_paper}
to obtain the space-continuous version of the stronger estimate (48) in Lemma~4.5 from \cite{our_paper}.
Then the convergence proof of the above semi-discrete numerical scheme follows analogically as in the case of the fully discrete numerical approximation;
we skip the detailed exposition for brevity and instead refer to \cite[Section 4]{stvf_new}, from where the necessary components of the proof
can be deduced.

Finally, we conclude that a convergence proof of the fully discrete numerical approximation for $d \geq 1$, which avoids the use of \cite[Lemma 4.4]{our_paper},
is provided in the upcoming paper \cite{stvf_new}.

\section*{Acknowledgement}
The authors would like to thank Martin Ondrej\'at for pointing the two mistakes in the original paper to us.

\bibliographystyle{plain}
\bibliography{refs_short}

\begin{thebibliography}{1}

\bibitem{book_attouch}
H.~Attouch, G.~Buttazzo, and G.~Michaille.
\newblock {\em Variational analysis in {S}obolev and {BV} spaces}, volume~6 of
  {\em MPS/SIAM Series on Optimization}.
\newblock Society for Industrial and Applied Mathematics (SIAM), Philadelphia,
  PA; Mathematical Programming Society (MPS), Philadelphia, PA, 2006.

\bibitem{Roeckner_TVF_paper}
V.~Barbu and M.~R\"{o}ckner.
\newblock Stochastic variational inequalities and applications to the total
  variation flow perturbed by linear multiplicative noise.
\newblock {\em Arch. Ration. Mech. Anal.}, 209(3):797--834, 2013.

\bibitem{stvf_new}
\v{L}. Ba\v{n}as, M.~R\"{o}ckner, and A.~Wilke.
\newblock Convergent numerical approximation of the stochastic total variation
  flow: the higher dimensional case.
\newblock preprint.

\bibitem{our_paper}
\v{L}. Ba\v{n}as, M.~R\"{o}ckner, and A.~Wilke.
\newblock Convergent numerical approximation of the stochastic total variation
  flow.
\newblock {\em Stoch. Partial Differ. Equ. Anal. Comput.}, 9(2):437--471, 2021.

\bibitem{BrennerS02}
S.~C. Brenner and L.~R. Scott.
\newblock {\em The Mathematical Theory of Finite Element Methods (second
  edition)}.
\newblock Springer-Verlag, New York, 2002.

\bibitem{book_temam_plast}
R.~Temam.
\newblock {\em Probl\`emes math\'{e}matiques en plasticit\'{e}}, volume~12 of
  {\em M\'{e}thodes Math\'{e}matiques de l'Informatique [Mathematical Methods
  of Information Science]}.
\newblock Gauthier-Villars, Montrouge, 1983.

\end{thebibliography}
\end{document}